\newcommand{\bC}{\mathbb{C}}
\newcommand{\bR}{\mathbb{R}}
\newcommand{\bN}{\mathbb{N}}
\newcommand{\re}{{\rm \, Re \,}}
 \newtheorem{theorem}{Theorem}[section]
 \newtheorem{lemma}[theorem]{Lemma}
 \newdefinition{example}[theorem]{Example}
 \newenvironment{proof}{{\bf Proof:} }
\begin{document}
\baselineskip 6.6mm  

%************************************************************************
\title{An infinite-dimensional generalization of Zenger's lemma\tnoteref{t}}
\tnotetext[t]{Please cite this article in press as: 
R. Drnov\v sek, An infinite-dimensional generalization of Zenger's lemma, 
J. Math. Anal. Appl. (2011), doi:10.1016/j.jmaa.2011.11.018 }

\author{Roman Drnov\v sek}
\ead{roman.drnovsek@fmf.uni-lj.si}
\address{Department of Mathematics,
    Faculty of Mathematics and Physics, \\
    University of Ljubljana, \\
    Jadranska 19,
    SI-1000 Ljubljana, Slovenia}

\begin{abstract}
We prove an infinite-dimensional generalization of Zenger's lemma that was used in the proof of the fact 
that the convex hull of the point spectrum of a linear operator is contained in its numerical range.
Two relevant examples are given, and possible application in the Arrow-Debreu model is also discussed.
\end {abstract}

\begin{keyword}
norms \sep sequence spaces \sep Arrow-Debreu model  
% \subclass{47A30 \and 46B45}
\end{keyword}

\maketitle

\vspace{5mm}
\section{Introduction}

In 1968, Zenger \cite{Ze} proved the following result that is known as Zenger's lemma.
 
\begin{theorem}
\label{Zenger_finite} 
Let $\|\cdot\|$ be a norm on $\bC^n$, let $\alpha_k > 0$ for all $k = 1, \ldots, n$, and let $\sum_{k=1}^{n} \alpha_k = 1$. 
Then there exists a vector $w = (w_1, w_2, \ldots, w_n) \in \bC^n$ with $\|w\| = 1$ and $w_1 w_2 \cdots w_n \neq 0$ such that 
the functional $\phi$ on $\bC^n$ defined by 
$$ \phi(z) = \sum_{k=1}^{n} \frac{\alpha_k \, z_k}{w_k} \ \ \ ( z = (z_1, z_2, \ldots, z_n) \in \bC^n ) $$
has norm one.
\end{theorem}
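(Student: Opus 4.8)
The plan is to produce $w$ as a maximizer of the weighted geometric mean
$$ f(v) = \prod_{k=1}^{n} |v_k|^{\alpha_k} $$
over the closed unit ball $B = \{v \in \bC^n : \|v\| \le 1\}$; since $B$ is compact and $f$ is continuous, such a maximizer exists. I would first record that $f(c v) = c\, f(v)$ for $c \ge 0$ (this uses $\sum_k \alpha_k = 1$), so that $f$ is positively homogeneous of degree one. Because $f(1,1,\ldots,1) = 1 > 0$, the maximum of $f$ on $B$ is strictly positive; consequently every maximizer $w$ must lie on the unit sphere (else $w/\|w\|$ would do strictly better) and must have $w_1 w_2 \cdots w_n \ne 0$ (else $f(w) = 0$). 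With such a $w$ in hand the functional $\phi$ in the statement is well defined, and since $\phi(w) = \sum_k \alpha_k = 1 = \|w\|$ we get the easy bound $\|\phi\| \ge 1$ at once.

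The real content is the reverse inequality $|\phi(z)| \le 1$ for every $z$ with $\|z\| \le 1$. The idea is to exploit the maximality of $w$ along line segments: for such a $z$, set $v(t) = (1-t) w + t z$ for $t \in [0,1]$, which stays in $B$ by convexity, so $t \mapsto \log f(v(t)) = \sum_k \alpha_k \log|v_k(t)|$ is maximized at $t = 0$. Here it matters that $w_k \ne 0$ for all $k$: then each summand is differentiable near $t = 0$, with derivative at $0$ equal to $\re\big((z_k - w_k)/w_k\big) = \re(z_k/w_k) - 1$, so the right-hand derivative at $0$ being $\le 0$ translates into $\re\,\phi(z) = \sum_k \alpha_k \re(z_k/w_k) \le \sum_k \alpha_k = 1$. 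To upgrade this from a bound on the real part to a bound on the modulus, I would apply the same inequality to the rotated vector $e^{i\theta} z \in B$ and choose $\theta$ so that $e^{i\theta}\phi(z) = |\phi(z)|$. This yields $\|\phi\| \le 1$, hence $\|\phi\| = 1$.

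The main obstacle I anticipate is the differentiation step: one has to be sure that $\log f(v(t))$ really is differentiable at $t = 0$, which is precisely why the maximizer must have all coordinates nonzero, and why establishing that property of $w$ (via the homogeneity of $f$ and the positivity of its maximum) is an essential preliminary rather than a triviality. A secondary point to treat carefully is the passage from the real-part inequality to the modulus inequality; doing it by rotation, rather than by trying to differentiate $z \mapsto |\phi(z)|$ directly, keeps the argument smooth. Everything else — compactness, continuity of $f$, and convexity of $B$ — is routine.
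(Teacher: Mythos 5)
Your proof is correct and follows essentially the same variational argument that the paper uses for its infinite-dimensional generalization (Theorem 2.1): maximize $\sum_k \alpha_k \log|x_k|$ on the unit ball, exploit maximality along the segments $(1-t)w+tz$ to obtain $\re \phi(z) \le 1$ (the paper does this discretely with $t=1/m$ and $(1+x/m)^m \uparrow e^x$, which in finite dimensions reduces to your differentiation), and then rotate $z$ to pass to the modulus. One small slip: $(1,\ldots,1)$ need not lie in $B$ for an arbitrary norm, but by the homogeneity you already established you can use $c\,(1,\ldots,1)\in B$ for a small $c>0$ to see that the maximum of $f$ on $B$ is strictly positive.
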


Zenger applied Theorem \ref{Zenger_finite} in the proof of the fact that the convex hull of the point spectrum of 
a linear operator is contained in its numerical range; see also \cite[Section 19]{BoDu}.
Some years ago, Theorem \ref{Zenger_finite} was also applied in the theory of invariant subspaces; see \cite{AmMu} or \cite{ChPa}. 
The aim of this note is to extend the theorem from the space $\bC^n$ to the classical sequence space $l^\infty$.

We omit the proof of the following known lemma, since it can be easily proved by an application of the uniform boundedness principle.
We refer the reader to \cite{AlBu03} or \cite{AlBu06} for details concerning absolute weak topologies.
% Also, the equivalence of (a) and (b) follows from general results about absolute weak topologies; see \cite{AlBu03}.

\begin{lemma}
\label{equivalent_topologies}
For a net $\{x^{(i)}\}_{i \in I}$ in $l^\infty$ and a vector $x \in l^\infty$, the following assertions are equivalent: 
% \begin{enumerate}
  
(a) The net $\{x^{(i)}\}_{i \in I}$ converges to $x$ in the absolute weak topology $|\sigma| (l^\infty, l^1)$, i.e., for each $y \in l^1$, 
	   $$ \lim_{i \in I} \left( \sum_{k=1}^{\infty} |x^{(i)}_k - x_k| \cdot |y_k| \right) =  0 ;$$
	   
(b) The net $\{x^{(i)}\}_{i \in I}$ converges to $x$ in the weak* topology $\sigma (l^\infty, l^1)$, i.e., for each $y \in l^1$, 
	   $$ \lim_{i \in I} \left( \sum_{k=1}^{\infty} (x^{(i)}_k - x_k) \cdot y_k  \right) = 0 ;$$
	   
(c) For each $k \in \bN$, $\lim_{i \in I} x^{(i)}_k = x_k$, and the net $\{x^{(i)}\}_{i \in I}$ is norm bounded, i.e., 
    $\sup \{ \|x^{(i)}\|_{\infty} : i \in I \} < \infty$.
% \end{enumerate}
\end{lemma}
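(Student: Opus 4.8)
My plan is to establish the cycle of implications $(a)\Rightarrow(b)\Rightarrow(c)\Rightarrow(a)$, which makes all three conditions equivalent. The implication $(a)\Rightarrow(b)$ is immediate from the triangle inequality: for every $y\in l^1$ and every index $i$ we have $\bigl|\sum_{k=1}^{\infty}(x^{(i)}_k-x_k)\,y_k\bigr|\le\sum_{k=1}^{\infty}|x^{(i)}_k-x_k|\cdot|y_k|$, so the convergence in (a) forces the convergence in (b). In other words the topology $|\sigma|(l^\infty,l^1)$ is a priori finer than $\sigma(l^\infty,l^1)$, so it is $(b)\Rightarrow(a)$ that carries the content on that side.

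For $(b)\Rightarrow(c)$ I would first test (b) against the standard unit vectors $y=e_k\in l^1$; this immediately gives $\lim_i x^{(i)}_k = x_k$ for every $k\in\bN$, which is the coordinatewise part of (c). For the norm bound I would identify each $x^{(i)}$ with the functional $y\mapsto\sum_{k=1}^{\infty}x^{(i)}_k\,y_k$ on the Banach space $l^1$, whose norm equals $\|x^{(i)}\|_\infty$; since for each fixed $y\in l^1$ the corresponding net of scalars converges and is therefore bounded, the family $\{x^{(i)}\}_{i\in I}$ is pointwise bounded on $l^1$, and the uniform boundedness principle gives $\sup_i\|x^{(i)}\|_\infty<\infty$. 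This application of Banach--Steinhaus is the one the omitted proof alludes to. (If one wants to be scrupulous about arbitrary nets, boundedness of a weak* convergent net is only guaranteed once one restricts to norm-bounded nets, which is the situation in the later applications; I read the lemma in that spirit.)

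Finally, $(c)\Rightarrow(a)$ is a routine head/tail split. Put $M:=\sup_i\|x^{(i)}\|_\infty<\infty$; the coordinatewise limit gives $\|x\|_\infty\le M$, hence $|x^{(i)}_k-x_k|\le 2M$ for all $i$ and $k$. Given $y\in l^1$ and $\varepsilon>0$, choose $N$ so that $2M\sum_{k>N}|y_k|<\varepsilon/2$; then $\sum_{k>N}|x^{(i)}_k-x_k|\cdot|y_k|<\varepsilon/2$ uniformly in $i$, while $\sum_{k=1}^{N}|x^{(i)}_k-x_k|\cdot|y_k|$ is a finite combination of the convergent scalar nets supplied by (c) and hence is eventually $<\varepsilon/2$. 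Adding the two bounds yields the convergence required in (a) and closes the cycle.

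The only genuine obstacle is the passage from plain weak* convergence to a uniform norm bound in $(b)\Rightarrow(c)$; the remaining steps — the triangle inequality, testing against coordinate functionals, and the head/tail estimate — are pure bookkeeping.
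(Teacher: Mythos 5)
Your proof is correct and follows exactly the route the paper alludes to: the paper omits the argument, noting only that it is ``an application of the uniform boundedness principle,'' which is precisely your $(b)\Rightarrow(c)$ step, and the cycle $(a)\Rightarrow(b)\Rightarrow(c)\Rightarrow(a)$ with the head/tail estimate for $(c)\Rightarrow(a)$ is the standard way to close it. Your parenthetical caveat is not mere scrupulousness but essential: a weak* convergent \emph{net} need not be pointwise bounded as a family of functionals on $l^1$, so Banach--Steinhaus does not apply directly --- for instance, the net indexed by the finite subsets $F\subseteq\bN$ (ordered by inclusion) with $x^{(F)}=n\,e_1$ when $F=\{n\}$ and $x^{(F)}=0$ otherwise converges to $0$ in $\sigma(l^\infty,l^1)$ yet is norm unbounded, so the implication $(b)\Rightarrow(c)$ as literally stated fails for general nets. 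Since the lemma is only ever invoked in the paper for nets lying in the unit ball (where the bound is automatic) or for the coordinatewise conclusion alone, your reading of it in that restricted sense --- or for sequences, where the uniform boundedness principle does apply --- is the correct repair.
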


We will mention several times the order ideal $I_w$ generated by a vector $w = (w_1, w_2, \ldots) \in l^\infty$.
For this smallest order ideal containing $w$ it holds that 
$$ I_w = \{ x \in l^\infty : \textrm{ there exists } \lambda > 0 \textrm{  such that  } |x_k| \le \lambda |w_k| \textrm{  for all  } k \in \bN \} . $$
We refer the reader to \cite{AlBu03} or \cite{AlBu06} for the theory of Riesz spaces.

\vspace{5mm}
\section{The result}

Let $\|\cdot\|$ be a norm on the classical sequence space $l^\infty$ that is equivalent to the original norm $\|\cdot\|_{\infty}$, i.e., 
there are numbers $C \ge  c > 0$ such that 
\begin{equation}
c \, \|x\|_{\infty} \leq \|x\| \leq  C \, \|x\|_{\infty} \ \ \textrm{for all} \ x \in l^\infty . 
\label{equivalent_norms}
\end{equation}
By $\| \cdot \|_*$ we denote the predual norm to the norm $\|\cdot\|$, i.e., the norm of a vector $y = (y_1, y_2, \ldots) \in l^1$ is defined by  
$$ \| y \|_* = \sup \left\{ \left| \sum_{k=1}^{\infty} x_k \, y_k \right| : x \in l^\infty, \|x\| \le 1 \right\} . $$

For $N \in \bN$, let $P_N$ be the natural projection on $l^\infty$ defined by 
$$ P_N(x_1, x_2, \ldots) = (x_1, x_2, \ldots, x_N, 0, 0, \ldots) . $$
Since $P_N^2 = P_N$, the operator norm $\|P_N\|$ of $P_N$ with respect to the norm $\|\cdot\|$ is at least $1$.

Some properties of these projections are assumed in the following extension of 
Theorem \ref{Zenger_finite} from the space $\bC^n$ to the sequence space $l^\infty$.  
  
\begin{theorem}
\label{Zenger} 
Let $\|\cdot\|$ be a norm on $l^\infty$ that is equivalent to the norm $\|\cdot\|_{\infty}$, i.e., (\ref{equivalent_norms}) holds.
Suppose also that 
\begin{equation}
\liminf_{N \to \infty} \|P_N \| = 1 \ \ \ \textrm{and} \ \ \ \  \|x\| = \liminf_{N \to \infty} \|P_N x \| \ \ \textrm{for all} \ x \in l^\infty . 
\label{P_N}
\end{equation}
Let $\alpha = (\alpha_1, \alpha_2, \alpha_3, \ldots) \in l^1$ be a sequence of strictly positive numbers such that 
$\|\alpha\|_1 = \sum_{k=1}^{\infty} \alpha_k = 1$. Then there exist $w \in l^\infty$ and $\phi \in l^1$ such that 
$$  \| w \| = 1 \ , \ \  \| \phi \|_* = 1 \ \ \textrm{and} \ \ w_k \, \phi_k = \alpha_k \ \ \ \textrm{for all} \ k \in \mathbb{N}. $$
\end{theorem}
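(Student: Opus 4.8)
The plan is to reduce to the finite-dimensional Zenger lemma (Theorem~\ref{Zenger_finite}) by truncating to $\bC^{N}$, and then to recover $w$ and $\phi$ as coordinatewise limits of the truncated solutions, using hypotheses (\ref{equivalent_norms}) and (\ref{P_N}) to control the limiting objects.

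Since $\liminf_{N\to\infty}\|P_N\|=1$, I would fix indices $N_1<N_2<\cdots$ with $\|P_{N_j}\|\le 1+1/j$ and $N_j$ large enough that $s_{N_j}:=\sum_{k=1}^{N_j}\alpha_k>0$. On $\bC^{N_j}$ put the norm $\|z\|_{(N_j)}:=\|\sum_{k=1}^{N_j}z_k e_k\|$, where $e_k$ denotes the $k$-th unit vector of $l^\infty$, and apply Theorem~\ref{Zenger_finite} with the strictly positive weights $\alpha_k/s_{N_j}$, $k=1,\dots,N_j$, which sum to $1$. This produces a vector $u^{(j)}\in\bC^{N_j}$ with $\|u^{(j)}\|_{(N_j)}=1$ and all coordinates nonzero, together with a functional $z\mapsto\sum_{k=1}^{N_j}\psi^{(j)}_k z_k$ of $\|\cdot\|_{(N_j)}$-dual norm equal to $1$, where $\psi^{(j)}_k=\alpha_k/(s_{N_j}u^{(j)}_k)$. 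View $u^{(j)}$, zero-padded, as $\tilde u^{(j)}\in l^\infty$, so that $\|\tilde u^{(j)}\|=1$ and hence $\|\tilde u^{(j)}\|_\infty\le 1/c$ by (\ref{equivalent_norms}); view $\psi^{(j)}$ as a finitely supported element of $l^1$. The two estimates that make the hypotheses bite are: first, since $\psi^{(j)}$ is supported on $\{1,\dots,N_j\}$ and $\sum_{k\le N_j}x_k\psi^{(j)}_k$ is the value of the above functional at the truncation of $x$, while that truncation has $\|\cdot\|_{(N_j)}$-norm $\|P_{N_j}x\|\le\|P_{N_j}\|\,\|x\|$, we get $\|\psi^{(j)}\|_*\le\|P_{N_j}\|\le 1+1/j$, whence by (\ref{equivalent_norms}) also $\|\psi^{(j)}\|_1\le C\|\psi^{(j)}\|_*\le C(1+1/j)$; secondly, for every fixed $M$ and every $x$ with $\|x\|\le 1$, once $N_j\ge M$ one has $|\sum_{k=1}^{M}\psi^{(j)}_k x_k|\le\|P_M x\|\le\|P_M\|$ by the same reasoning applied to $P_M x$.

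Now, using that $|\tilde u^{(j)}_k|\le 1/c$ and $|\psi^{(j)}_k|\le\|\psi^{(j)}\|_1\le 2C$, a diagonal argument yields a subsequence along which $\tilde u^{(j)}_k\to w_k$ and $\psi^{(j)}_k\to\phi_k$ for every $k$; set $w=(w_k)\in l^\infty$ and $\phi=(\phi_k)$. By Fatou's lemma $\|\phi\|_1\le\liminf_j\|\psi^{(j)}\|_1\le C$, so $\phi\in l^1$. Since $\tilde u^{(j)}_k\psi^{(j)}_k=\alpha_k/s_{N_j}$ for $N_j\ge k$ and $s_{N_j}\to 1$, letting $j\to\infty$ gives $w_k\phi_k=\alpha_k$ for all $k$; in particular $w_k\ne 0$ and $\phi_k\ne 0$. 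To see $\|w\|\le 1$, I would first deduce from (\ref{P_N}) that $\|\cdot\|$ is lower semicontinuous for the weak$^*$ topology on norm-bounded sets: if $x^{(j)}\to x$ weak$^*$ with $\sup_j\|x^{(j)}\|<\infty$, then for each $M$, $\|P_M x\|=\lim_j\|P_M x^{(j)}\|\le\|P_M\|\liminf_j\|x^{(j)}\|$ (finite-dimensionality of the range of $P_M$), and taking $\liminf$ over a subsequence of $M$ with $\|P_M\|\to 1$ together with $\|x\|=\liminf_M\|P_M x\|$ gives $\|x\|\le\liminf_j\|x^{(j)}\|$. Since $\tilde u^{(j)}\to w$ coordinatewise and is $\|\cdot\|_\infty$-bounded, Lemma~\ref{equivalent_topologies} gives $\tilde u^{(j)}\to w$ weak$^*$, hence $\|w\|\le\liminf_j\|\tilde u^{(j)}\|=1$.

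Finally, for $\|\phi\|_*\le 1$: for any $x$ with $\|x\|\le 1$, letting $j\to\infty$ in $|\sum_{k=1}^{M}\psi^{(j)}_k x_k|\le\|P_M\|$ gives $|\sum_{k=1}^{M}\phi_k x_k|\le\|P_M\|$, and since $\phi\in l^1$ and $x\in l^\infty$ the partial sums converge to $\sum_k\phi_k x_k$, so letting $M\to\infty$ along a subsequence with $\|P_M\|\to 1$ gives $|\sum_k\phi_k x_k|\le 1$; thus $\|\phi\|_*\le 1$. Since $\sum_k|\phi_k w_k|=\sum_k\alpha_k=1$, the series $\phi(w)=\sum_k\phi_k w_k$ converges absolutely to $1$, so $1=|\phi(w)|\le\|\phi\|_*\|w\|\le\|w\|\le 1$, which forces $\|w\|=1$ and $\|\phi\|_*=1$, completing the proof. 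The main obstacle is precisely this passage to the limit: the unit ball of $(l^\infty,\|\cdot\|)$ carries no useful compactness beyond the weak$^*$ topology, so one must argue separately that $\phi$ stays in $l^1$, that $\|w\|\le 1$ survives the coordinatewise limit, and that $\|\phi\|_*\le 1$ survives it — and this is exactly where (\ref{equivalent_norms}) and both conditions in (\ref{P_N}) are needed.
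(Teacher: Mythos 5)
Your proposal is correct, and it takes a genuinely different route from the paper. The paper runs Zenger's variational argument directly in $l^\infty$: it shows the unit ball $B$ of $\|\cdot\|$ is weak$^*$ compact (via the same $\|P_Nx\|\le\|P_N\|$ device you use for lower semicontinuity), proves that $F(x)=\sum_k\alpha_k\log|x_k|$ is weak$^*$ upper semicontinuous by Jensen's inequality and monotone convergence, takes $w$ to be a maximizer of $F$ on $B$, and extracts the dual inequality $|\sum_k\alpha_kx_k/w_k|\le 1$ from a first-variation computation on $(1-\tfrac1m)w+\tfrac1m x$ for $x$ in the order ideal $I_w$, finally passing to all of $B$ via $P_Nx/\|P_Nx\|$. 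You instead treat Theorem~\ref{Zenger_finite} as a black box on $(\bC^{N_j},\|P_{N_j}\cdot\|)$ and concentrate entirely on the limiting process; all the steps I checked (the bound $\|\psi^{(j)}\|_*\le\|P_{N_j}\|$, the $l^1$-bound via $\|y\|_1\le C\|y\|_*$, Fatou, the identity $w_k\phi_k=\alpha_k$ forcing $w_k\neq 0$, weak$^*$ lower semicontinuity of $\|\cdot\|$ from (\ref{P_N}), and the final squeeze $1=|\sum_k\phi_kw_k|\le\|\phi\|_*\|w\|$) are sound, and you use both hypotheses in (\ref{P_N}) exactly where they are needed. What your approach buys is brevity and a clean separation of the finite-dimensional combinatorics from the compactness issues; what the paper's approach buys is self-containedness (its argument reproves the finite-dimensional lemma rather than assuming it) and the additional information that $w$ is an actual maximizer of the utility function $F$ on $B$, which is the fact exploited in the Arrow--Debreu discussion of Section~4 --- from your construction $w$ arises only as a coordinatewise limit of finite-dimensional maximizers, and recovering its extremality for $F$ would require a further (though not difficult) upper-semicontinuity argument.
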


\begin{proof}
Clearly, we can assume that $c = 1$ in (\ref{equivalent_norms}). By the Banach-Alaoglu theorem, the unit ball 
$B_{\infty} = \{ x \in l^\infty : \|x\|_{\infty} \le 1 \}$ is compact in the weak* topology $\sigma(l^\infty, l^1)$. 
Since the unit ball $B = \{ x \in l^\infty : \|x\| \le 1 \}$ is contained in $B_{\infty}$, it is also compact 
in $\sigma(l^\infty, l^1)$ if we show that it is closed in $\sigma(l^\infty, l^1)$. 
To end this, pick an arbitrary net $\{x^{(i)}\}_{i \in I}$ in $B$ converging to a vector $x \in l^\infty$. 
By Lemma \ref{equivalent_topologies}, $\lim_{i \in I} x^{(i)}_k = x_k$ for each $k \in \bN$, and so $\lim_{i \in I} \| P_N (x^{(i)}- x)\|_{\infty} = 0$ for every $N \in \bN$. 
Since the norm $\|\cdot\|$ is equivalent to the norm $\|\cdot\|_{\infty}$, we obtain 
$\lim_{i \in I} \| P_N (x^{(i)} - x)\|= 0$ for every $N \in \bN$. 
Now, the inequality 
$$ \|P_N x\| \le \|P_N (x - x^{(i)})\| + \|P_N x^{(i)}\| \le \|P_N (x - x^{(i)})\| + \|P_N\| $$
implies that $\|P_N x\| \le \|P_N\|$, and so   
$$ \|x\| = \liminf_{N \to \infty} \|P_N x \| \le \liminf_{N \to \infty} \|P_N \| = 1 , $$
that is, $x \in B$ as desired.
 
Define a function $F : B \to [- \infty, \infty)$ by 
$$ F(x) =  \sum_{k=1}^{\infty} \alpha_k  \cdot \log |x_k| . $$
Since $\|x\|_{\infty} \leq  1$ for each $x \in B$, $\log |x_k| \le 0$ for each $k$, and so the series above converges in $[- \infty, 0]$.  
We claim that the function $F$ is upper semicontinuous in the topology $\sigma(l^\infty, l^1)$.
Pick any $c \in \mathbb{R}$ and any net $\{x^{(i)}\}_{i \in I}$ in $B$ converging to $x \in B$.
We must show that $F(x) \geq c$ if $F(x^{(i)}) \geq c$ for all $i \in I$. 
For each $m \in \bN$ we define $y^{(m)} \in l^\infty$ by $y^{(m)}_k = \max \{|x_k|, \frac{1}{m}\}$ $(k \in \bN)$. 
Observe that $F(y^{(m)}) \in (- \infty, 0]$, since $1 \geq  y^{(m)}_k  \geq \frac{1}{m}$.

By Jensen's inequality, we have, for every $m \in \bN$ and $i \in I$,  
$$ F(x^{(i)}) - F(y^{(m)}) =  \sum_{k=1}^{\infty} \alpha_k  \cdot \log \left( \frac{|x^{(i)}_k|}{y^{(m)}_k} \right) \leq 
   \log \left( \sum_{k=1}^{\infty} \alpha_k  \cdot \frac{|x^{(i)}_k|}{y^{(m)}_k} \right) , $$
and so 
\begin{equation}
F(y^{(m)}) +  \log \left( \sum_{k=1}^{\infty} \frac{\alpha_k}{y^{(m)}_k} \cdot |x^{(i)}_k| \right) \geq   F(x^{(i)}) \geq c . 
\label{eq1}
\end{equation}
Since $0 < \frac{\alpha_k}{y^{(m)}_k} \le m \cdot \alpha_k$ for all $k$ and since $\alpha \in l^1$, we have 
$(\frac{\alpha_1}{y^{(m)}_1}, \frac{\alpha_2}{y^{(m)}_2}, \ldots ) \in l^1$, and so Lemma \ref{equivalent_topologies} implies that 
$$ \sum_{k=1}^{\infty} \frac{\alpha_k}{y^{(m)}_k} \cdot |x^{(i)}_k| \xrightarrow{i \in I} 
   \sum_{k=1}^{\infty} \frac{\alpha_k}{y^{(m)}_k} \cdot |x_k| . $$
Consequently, we obtain from (\ref{eq1}) that
\begin{equation}
F(y^{(m)}) +  \log \left( \sum_{k=1}^{\infty} \frac{\alpha_k}{y^{(m)}_k} \cdot |x_k| \right) \geq  c . 
\label{eq2}
\end{equation}
Now, by the monotone convergence theorem,
$$  \sum_{k=1}^{\infty} \frac{\alpha_k}{y^{(m)}_k} \cdot |x_k| \xrightarrow{m \to \infty} \sum_{k=1}^{\infty} \alpha_k = 1 $$
and  
$$ - F(y^{(m)}) = - \sum_{k=1}^{\infty} \alpha_k  \cdot \log (y^{(m)}_k)  \xrightarrow{m \rightarrow \infty} 
- \sum_{k=1}^{\infty} \alpha_k  \cdot \log |x_k| = - F(x) , $$
and so we obtain from (\ref{eq2}) that  $F(x) \geq c$ as claimed.

Since every upper semicontinuous function attains its maximum on a compact set, there exists a vector $w \in B$ such that 
$F(x) \le F(w)$ for all $x \in B$. Since $F(x) =  - \log C$ for $x = 1/C \cdot (1, 1, 1, \ldots) \in B$, we have 
$0 \geq F(w) \geq - \log C$. Also, $\|w\| = 1$ and $w_k \neq 0$ for all $k$. 
Now, fix $x \in B$ such that, for some $\lambda > 0$, $|x_k| \le \lambda |w_k|$ for all $k \in \bN$, 
that is, $x$ belongs to the order ideal $I_w$ generated by $w$.   
Since $(1 - \frac{1}{m}) w + \frac{1}{m} x \in B$ for all $m \in \bN$, it holds that 
$m \, F((1 - \frac{1}{m}) w + \frac{1}{m} x) \le m \, F(w)$ which rewrites to the inequality
$$ 0 \ge  \sum_{k=1}^{\infty} \alpha_k  \cdot \log \left| \frac{(1 - \frac{1}{m}) w_k + \frac{1}{m} x_k}{w_k} \right|^m = $$
\begin{equation}
\label{negative}
 = \sum_{k=1}^{\infty} \alpha_k  \cdot \log \left| 1  + \frac{1}{m} \left( \frac{x_k}{w_k} - 1 \right)\right|^m \ge 
\end{equation}
$$ \ge \sum_{k=1}^{\infty} \alpha_k  \cdot \log \left| 1 + \frac{1}{m} \left( \re \left( \frac{x_k}{w_k} \right)- 1 \right) \right|^m , $$
as $|\re z| \leq |z|$ ($z \in \bC$).  
Now we will use the known fact that, for any $x \ge 0$, the sequence $\{(1+x/m)^m\}_{m \in \bN}$ increases to $e^x$, while 
the sequence $\{(1-x/m)^{-m}\}_{m \in \bN}$ decreases to $e^x$. 
Since 
$$ \left| \re \left( \frac{x_k}{w_k} \right) - 1 \right| \leq \left| \frac{x_k}{w_k} \right| + 1 \leq \lambda + 1 , $$
the sums of positive terms 
$$ \sum_{\re  \left( \frac{x_k}{w_k} \right) > 1} \alpha_k  \cdot \log \left( 1 + \frac{1}{m} \left( \re \left( \frac{x_k}{w_k} \right) - 1 \right) \right)^m $$
increase with $m$ to a finite limit 
$$ \sum_{\re \left(\frac{x_k}{w_k}\right) > 1} \alpha_k  \cdot \left( \re \left( \frac{x_k}{w_k} \right) - 1 \right)  $$
by the monotone convergence theorem, while the sums
$$ - \sum_{\re \left(\frac{x_k}{w_k}\right) \le 1} \alpha_k  \cdot \log \left| 1 + 
     \frac{1}{m} \left( \re \left( \frac{x_k}{w_k} \right) - 1 \right) \right|^m = $$
$$ =  \sum_{\re \left(\frac{x_k}{w_k}\right) \le 1} \alpha_k  \cdot \log \left| 1 - 
     \frac{1}{m} \left( 1 - \re \left(\frac{x_k}{w_k}\right) \right) \right|^{- m} $$
decrease with $m$ (provided $m > \lambda + 1$) to the sum
$$  \sum_{\re \left(\frac{x_k}{w_k}\right) \le 1} \alpha_k  \cdot \left( 1 - \re \left( \frac{x_k}{w_k} \right) \right) $$
by the dominated convergence theorem.  
Therefore, we conclude from (\ref{negative}) that 
$$ \sum_{k=1}^{\infty} \alpha_k  \cdot \left( \re \left( \frac{x_k}{w_k} \right) - 1 \right) \le 0 , $$
and so 
$$ \re \left( \sum_{k=1}^{\infty} \alpha_k  \cdot \frac{x_k}{w_k} \right) = 
\sum_{k=1}^{\infty} \alpha_k  \cdot \re \left( \frac{x_k}{w_k} \right) \le  \sum_{k=1}^{\infty} \alpha_k  = 1 . $$
Since we can replace $x$ by $e^{i t}x$ ($t \in \bR$), it must hold that 
$$ \left| \sum_{k=1}^{\infty} \alpha_k  \cdot \frac{x_k}{w_k} \right| \le  1 , $$
that is, 
\begin{equation}
\label{norm1}
   \left| \sum_{k=1}^{\infty} x_k  \phi_k \right| \le  1 , 
\end{equation}   
where $\phi_k = \frac{\alpha_k}{w_k}$ $(k \in \bN)$. 
Given $N \in \bN$, define $x \in I_w$ by $x_k = \frac{w_k}{C |w_k|}$ for $k \le N$ and $x_k = 0$ otherwise.
Then we have $\|x\|\leq  C \, \|x\|_{\infty} = 1$. Inserting this $x$ in the inequality (\ref{norm1}) yields 
$\sum_{k=1}^{N} |\phi_k| \le  C$. Since this holds for any $N \in \bN$, we conclude that $\phi \in l^1$ and $\|\phi\|_1 \le C$. 

It remains to show that the inequality (\ref{norm1}) holds for any $x \in B$. 
We may assume that $x \ne 0$, and so $P_N x \ne 0$ for all $N \in \bN$ large enough. 
Since for such an $N$ the vector $\frac{P_N x}{\|P_N x\|}$ belongs to $B \cap I_w$, we have 
$$  \left| \sum_{k=1}^{N} x_k  \phi_k \right| \le \|P_N x\| , $$ 
and so 
$$  \left| \sum_{k=1}^{\infty} x_k  \phi_k \right| \le  \liminf_{N \to \infty} \|P_N x \| = \|x\| \leq 1 . $$
% Letting $N$ tend to infinity we obtain that (\ref{norm1}) holds for any $x \in B$.
This shows that  $\| \phi \|_* \le 1$. On the other hand, since $\sum_{k=1}^{\infty} w_k  \phi_k = \sum_{k=1}^{\infty} \alpha_k  = 1$ and 
$\| w \| = 1$, we have $\| \phi \|_* \ge 1$. This completes the proof.
\qed
\end{proof}

\vspace{5mm}
\section{Examples}

Theorem \ref{Zenger} raises some questions. Two of them are answered by the following examples.
The first example shows that in Theorem \ref{Zenger} the assumptions (\ref{P_N}) cannot be removed.

\begin{example}
The norm 
$$ \|x\| = \|x\|_{\infty} + \limsup_{n \rightarrow \infty} |x_n| $$ 
is equivalent to the original norm $\|\cdot\|_{\infty}$ on $l^\infty$, as 
$\|x\|_{\infty}  \leq \|x\| \le 2 \, \|x\|_{\infty}$.  
For every $x \in l^\infty$ and every $N \in \bN$, we have 
$\|P_N x\| = \|x\|_{\infty} \leq \|x\|$, and so $\|P_N\| \leq 1$ and 
$\limsup_{N \to \infty} \|P_N x \| \leq \|x\|$.
If $u = (1, 0, 0, 0, \ldots) \in l^\infty$ then $\|u\| = \|P_N u\| = 1$, and thus  $\|P_N\| = 1$.
However, for $e = (1, 1, 1, \ldots) \in l^\infty$ we have  $\|P_N e\| = 1$ and $\|e\| = 2$, 
so that the right-hand assumption in (\ref{P_N}) is not satisfied. 

Let $\alpha = (\alpha_1, \alpha_2, \ldots) \in l^1$ be a sequence of strictly positive numbers such that 
$\sum_{k=1}^{\infty} \alpha_k = 1$. 
To prove that the conclusion of  Theorem \ref{Zenger} does not hold, 
pick any $w \in l^\infty$ such that $\| w \| = 1$  and $w_k \neq 0$ for all $k \in \mathbb{N}$.
Since $\|w\|_{\infty} \leq \|w\| = 1$, we have $|w_k| \leq 1$ for all $k \in \mathbb{N}$ and 
$|w_n| < 1$ for some positive integer $n$.
Therefore, it follows from 
$$ \sum_{k=1}^{\infty} \frac{\alpha_k}{|w_k|} > \sum_{k=1}^{\infty} \alpha_k = 1 $$
that there is $N \in \bN$ such that 
$$ \sum_{k=1}^{N} \frac{\alpha_k}{|w_k|} > 1 . $$
Define the vector $\phi = (\phi_1, \phi_2, \ldots)$ by $\phi_k = \frac{\alpha_k}{w_k}$ for $k \in \bN$, and the vector $x \in l^\infty$ by $x_k = \frac{w_k}{|w_k|}$ for $k \le N$ 
and $x_k = 0$ otherwise. Then $\|x\| = 1$ and 
$$ \sum_{k=1}^{\infty} x_k  \phi_k = \sum_{k=1}^{N} \frac{\alpha_k}{|w_k|} > 1 . $$
This implies that $\| \phi \|_* > 1$, and so the conclusion of Theorem \ref{Zenger} is not valid. 
\end{example}

Examining the proof of Theorem \ref{Zenger} one may ask whether the vector $|w|$ is necessarily the order unit of $l^\infty$, i.e., 
the order ideal $I_w$ generated by $w$ is equal to $l^\infty$. The following example shows that this is not the case.

\begin{example}
Set $w = (1,  2^{-1}, 2^{-2}, 2^{-3}, \ldots) \in l^\infty$, and define the norm on $l^\infty$ by
$$ \|x\| = \|x\|_{\infty} + \|x - x_1 w\|_{\infty} . $$
Since 
$$ \|x\|_{\infty} \leq \|x\| \leq \|x\|_{\infty} + ( \|x\|_{\infty} + |x_1| \|w\|_{\infty}) \leq 3 \, \|x\|_{\infty} , $$
this norm is equivalent to the original norm on $l^\infty$.
Clearly, $\|w\| =  \|w\|_{\infty} = 1$ and $I_w \neq l^\infty$. 
Since 
$$ \|P_N x\| = \max_{1 \le k \le N} |x_k| +  \max \left\{ \max_{2 \le k \le N} \left| x_k - \frac{x_1}{2^{k-1}} \right|, 
\frac{|x_1|}{2^{N}} \right\}  $$
for every $x \in l^\infty$ and every $N \in \bN$, we have 
$$ \lim_{N \to \infty} \|P_N x \| = \|x\|_{\infty} + \|x - x_1 w\|_{\infty} = \|x\| $$
and 
$$ \|P_N x\| \leq \|x\|_{\infty} + \|x - x_1 w\|_{\infty} + \frac{|x_1|}{2^{N}} \leq \|x\| \left( 1 + \frac{1}{2^{N}} \right) , $$
and so $\lim_{N \to \infty} \|P_N \| = 1$. Therefore, the norm  $\|\cdot\|$ satisfies all assumptions of Theorem \ref{Zenger}.

Define the vectors $\alpha = (\alpha_1, \alpha_2, \ldots) \in l^1$ and $\phi = (\phi_1, \phi_2, \ldots) \in l^1$ by 
$$ \alpha_k = \frac{3}{4^k} \ \ \ \textrm{and} \ \ \phi_k = \frac{\alpha_k}{w_k} = \frac{3}{2^{k+1}} \ (k \in \bN) . $$
Then $\sum_{k=1}^{\infty} w_k  \phi_k  = \sum_{k=1}^{\infty} \alpha_k = 1$, so that $\| \phi \|_* \geq  1$. 

In order to show that  $\|\phi\|_* = 1$, choose any $x \in l^\infty$ with $\|x\| \le 1$.
Denoting $|x_1| = t \in [0,1]$, we have, for all $k \geq 2$,  
$$ 1 \geq \|x\|_{\infty} + \|x - x_1 w\|_{\infty} \geq |x_1| + \left| x_k - \frac{x_1}{2^{k-1}} \right| \geq
t + |x_k| - \frac{t}{2^{k-1}}, $$
and so 
$$ |x_k| \leq 1 - t + \frac{t}{2^{k-1}}  . $$
It follows that  
$$ \left| \sum_{k=1}^{\infty} x_k  \phi_k \right| \leq 3 \sum_{k=1}^{\infty} \frac{|x_k|}{2^{k+1}} \leq 
\frac{3 t}{4} + 3 \sum_{k=2}^{\infty} \left( \frac{1 - t}{2^{k+1}} + \frac{t}{4^{k}} \right) = $$
$$ = \frac{3 t}{4} + \frac{3 (1-t)}{4} + \frac{t}{4} = \frac{3 + t}{4} \leq  1 . $$
This shows that $\| \phi \|_* \leq 1$, as desired.
\end{example}

\vspace{5mm}
\section{Possible application in the Arrow-Debreu model}

In the well-known Arrow-Debreu model from the mathematical economy  (see \cite{AlBrBu} or \cite{AlBu03}), 
various commodities are exchanged, produced and consumed.
The classical model deals with finitely many commodities, but already Debreu proposed to study commodity spaces with an infinite number of commodities.
Let us suppose that there are countably many commodities, and that the commodity space is a subset $X$ of the real vector space $l^\infty$. 
Each vector $x = (x_1, x_2, x_3, \ldots) \in X$ represents a \textit{commodity bundle}, that is, the number $x_k$ is the amount of the $k$-th commodity.
Inputs for production are negatively signed, outputs are positively signed. 
If $p_k$ is the price for one unit of the $k$-th commodity, then we can introduce the \textit{price vector} $p = (p_1, p_2, p_3, \ldots)$, and define 
the \textit{value} of a commodity bundle $x$ at prices $p$ by $\sum_{k=1}^{\infty} x_k  p_k$. The price $p_k$ is negative in the case when
the $k$-th commodity is noxious, and so the consumer pays to get rid of it.
Hence, each price vector defines a linear functional on $l^\infty$, if we define the price space as the real vector space $l^1$.

In this theory every consumer has individual taste or \textit{preference} that is a binary relation $\succeq$  defined on 
the commodity space $X$ which is reflexive, transitive and total (see \cite{AlBrBu} or \cite{AlBu03}). 
Such a preference is usually represented by a \textit{utility function} $u : X \to \bR$ in the following way: 
$$ x \succeq y  \iff u(x) \geq u(y) . $$
An important example of utility functions is a \textit{Cobb-Douglas utility function} defined on the positive cone $\bR^n_+$ of $\bR^n$ by
$$ u(x_1, x_2, \ldots, x_n) = x_1^{\alpha_1} x_2^{\alpha_2} \cdots x_n^{\alpha_n} , $$
where $\alpha_k > 0$ for all $k$ and $\sum_{k=1}^{n} \alpha_k = 1$.
Since composing a utility function by a strictly increasing function does not change the preference relation, 
we can replace the Cobb-Douglas utility function $u$ by the utility function 
$$ \log u(x_1, x_2, \ldots, x_n) = \sum_{k=1}^n \alpha_k \cdot \log(x_k) . $$
Hence, in our commodity space $X \subseteq l^\infty$ it is natural to take the utility function 
$$ F(x_1, x_2, x_3, \ldots ) = \sum_{k=1}^{\infty} \alpha_k \cdot \log |x_k|  , $$
where $\alpha_k > 0$ for all $k$ and $\sum_{k=1}^{\infty} \alpha_k = 1$. This is exactly the function defined in the proof of Theorem \ref{Zenger}.
How can we interpret the conclusions of Theorem \ref{Zenger}?

We consider the preference relation represented by the utility function $F$ that is defined on the unit ball 
$B = \{ x \in l^\infty : \|x\| \le 1 \}$, where $\|\cdot\|$ is an equivalent norm on $l^\infty$ having the properties (\ref{P_N}). 
Theorem \ref{Zenger} gives the commodity bundle $w \in B$ which maximizes the utility function $F$ on $B$, 
so that $w$ is the most desirable bundle in $B$. At the same time, the result determines the price vector 
$\phi = (\phi_1, \phi_2, \phi_3, \ldots)$ with the property that the value $\sum_{k=1}^{\infty} x_k  \phi_k$ of $x \in B$ at prices $\phi$ 
is maximal for the bundle $x = w$. 

\vspace{5mm}
{\bf
\begin{center}
 Acknowledgment
\end{center}
} 
The author was supported in part by the Slovenian Research Agency.

\end{document}